\documentclass[a4paper,10pt]{amsart}
\usepackage{amssymb,amsmath}
\newcommand{\Q}{\mathbb{Q}}
\newcommand{\R}{\mathbb{R}}

\newcommand{\N}{\mathbb{N}}
\newtheorem{theorem}{Theorem}
\newtheorem{lemma}{Lemma}

\newcommand{\Z}{\mathbb{Z}}

\newcommand{\twosum}[2]{\sum_{\substack{#1\\#2}}}

\newcommand{\ep}{\varepsilon}

\newcommand{\Mod}[1]{\;(\operatorname{mod}\,#1)}
\makeatletter
\@namedef{subjclassname@2020}{\textup{2020} Mathematics Subject Classification}
\makeatother

\begin{document}
\title{Bounds for the Quartic Weyl Sum}
\author{D.R. Heath-Brown\\Mathematical Institute, Oxford}
\date{}

\begin{abstract}

We improve the standard Weyl estimate 
for quartic exponential sums in which the argument is a quadratic
irrational.  Specifically we show that
\[\sum_{n\le N} e(\alpha n^4)\ll_{\ep,\alpha}N^{5/6+\ep}\]
for any $\ep>0$ and any quadratic irrational $\alpha\in\R-\Q$.
Classically one would have had the exponent $7/8+\ep$
for such $\alpha$. In contrast to the author's earlier work \cite{cubweyl}
on cubic 
Weyl sums (which was conditional on the $abc$-conjecture), we show that the 
van der Corput $AB$-steps are sufficient for the quartic case, rather than the 
$BAAB$-process needed for the cubic sum.
\end{abstract}

\address{Mathematical Institute\\
Radcliffe Observatory Quarter\\ Woodstock Road\\ Oxford\\ OX2 6GG}
\email{rhb@maths.ox.ac.uk}

\subjclass[2020]{11L15}
\keywords{Weyl sum; Quartic; Exponent; Quadratic irrational}

\maketitle

\section{Introduction}
This paper is concerned with estimates for the Weyl sum
\[S_k(\alpha,N)=\sum_{n\le N} e(\alpha n^k),\]
in the particular case $k=4$. Here  $e(x)=\exp(2\pi i x)$ as usual,
and $k\ge 3$ is an integer. In a companion paper \cite{cubweyl}
we examined the cubic Weyl sum $S_3(\alpha,N)$, but there are significant
differences between the two investigations, as well as similarities. 

The classical ``Weyl bound'' takes the form
\begin{equation}\label{WB}
S_k(\alpha,N)\ll_{\ep}N^{1-2^{1-k}+\ep},
\end{equation}
where $\ep$ is an arbitrary small
positive number, whenever  $\alpha$ has a rational approximation
$a/q$ such that 
\begin{equation}\label{WBC}
|\alpha-a/q|\ll \frac{1}{qN^{k-1}},\;\;\; N\ll q\ll N^{k-1}.
\end{equation}
This is essentially Lemma 3 in Hardy and Littlewood's 
first `Partitio Numerorum' paper \cite{HL}. The method however is 
due to Weyl \cite {Weyl}, who did not work out a quantitative estimate.

If $\alpha\in\R-\Q$ is algebraic one may conclude from Roth's theorem
that there is always a fraction $a/q$ satisfying (\ref{WBC}) when $N$
is large enough. In the case $k=4$ we deduce that   
\begin{equation}\label{WB4}
S_4(\alpha,N)\ll_{\ep,\alpha}N^{7/8+\ep}
\end{equation}
for such $\alpha$.

The general exponent $7/8$ in (\ref{WB4}) has never been improved on,
but our goal in 
the present paper is to show that one can do better for special values
of $\alpha$.  We shall prove the following bound.

\begin{theorem}\label{sqrt2}
Let $\alpha\in\R-\Q$ be a quadratic irrational.  Then
\[S_4(\alpha,N)\ll_{\ep,\alpha}N^{5/6+\ep}\]
for any $\ep>0$.
\end{theorem}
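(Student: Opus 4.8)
The plan is to combine Weyl differencing (the van der Corput $A$-process) with a single application of the $B$-process (Poisson summation / stationary phase), and then to feed the outcome into the one genuinely arithmetic property of a quadratic irrational, namely that it is badly approximable. First I would apply the $A$-process to $S_4(\alpha,N)$ with a shift parameter $H$, obtaining
\[ |S_4(\alpha,N)|^2 \ll \frac{N^2}{H} + \frac{N}{H}\sum_{1\le h\le H}\Bigl|\sum_{n} e\bigl(4\alpha h\, n^3 + 6\alpha h^2 n^2 + \cdots\bigr)\Bigr|, \]
so that the quartic is replaced by an average over the dilation $h$ of cubic sums $C_h$ whose leading coefficient is the controlled quantity $4\alpha h$. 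Everything then reduces to a mean-value estimate of the shape $\sum_{1\le h\le H}|C_h|\ll_\ep N^{2/3+\ep}H$: inserting this gives $|S_4(\alpha,N)|^2\ll N^2/H+N^{5/3+\ep}$, and the choice $H\asymp N^{1/3}$ balances the two terms and yields the exponent $5/6$. The whole problem is thus to show that, averaged over $h$, the cubic sums display better than the generic Weyl cancellation $N^{3/4}$.

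To extract this I would apply the $B$-process to each $C_h$. Because the cubic phase has derivative of size $\asymp \alpha h N^2$, stationary phase replaces $C_h$ by a dual sum of length $\asymp \alpha h N^2$, amplitude $\asymp(\alpha h N)^{-1/2}$ and a phase with a fractional-power shape $\gamma_h m^{3/2}$, $\gamma_h\asymp(\alpha h)^{-1/2}$, together with boundary and error terms that I expect to be manageable. The decisive point is the estimation of these transformed sums and of their average over $h$, and here the quadratic irrationality of $\alpha$ enters: since $\alpha$ has bounded partial quotients one has $\|m\alpha\|\gg m^{-1}$ for every $m\ge1$, which both controls sums of $\min(N,\|m\alpha\|^{-1})$ and, crucially, limits how often the frequencies $\alpha h$ and the products occurring in the differenced coefficients can approach a rational with small denominator. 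Splitting the range of $h$ dyadically and separating the shift configurations according to the denominator of the rational approximation to $\alpha h$, I would use bad approximability to show that the ``major'' configurations are sufficiently sparse while the remaining contributions are small by the cancellation the $B$-process has exposed.

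I expect the main obstacle to be precisely this post-$B$ estimation: bounding the dual fractional-power sums uniformly in the lower-order coefficients of the differenced polynomial, and confirming that the single bound $\|m\alpha\|\gg m^{-1}$ delivers the full saving down to the average $N^{2/3}$. This is exactly the point at which the quartic problem is more forgiving than the cubic one: for $S_3$ the bare $AB$ combination loses too much and one is driven to the longer $BAAB$ process (and, in \cite{cubweyl}, to the $abc$-conjecture), whereas for $S_4$ the extra degree makes the frequency $\alpha h N^2$ large enough that one $B$-process, fed by bad approximability, already suffices. Some further care will be needed with the stationary-phase error and boundary terms and with uniformity in $\alpha$ through its continued-fraction constant, but I anticipate these being routine once the central mean-value bound for $\sum_{h\le H}|C_h|$ is established.
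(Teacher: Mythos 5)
Your reduction bookkeeping is fine (an $A$-step with $H\asymp N^{1/3}$ does balance $N^2/H$ against $N\cdot N^{2/3+\ep}$ to give exponent $5/6$), but the two ingredients you rely on to prove the central mean value $\sum_{h\le H}|C_h|\ll N^{2/3+\ep}H$ both fail, so there is a genuine gap. First, the archimedean $B$-process goes the wrong way here: the differenced cubic phase $4\alpha hn^3+\cdots$ has derivative of size $\asymp hN^2\gg N$, so Poisson summation replaces a sum of length $N$ by a dual sum of length $\asymp hN^2$ --- longer, not shorter --- and the trivial bound on the transformed sum is $\asymp h^{1/2}N^{3/2}$, worse than trivial. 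Recovering $|C_h|\ll N^{2/3}$ would require essentially square-root cancellation, uniformly in $h$ and in the lower-order coefficients, in fractional-power sums of the shape $\sum_{m\le hN^2}e(\gamma_h m^{3/2}+\cdots)$; no exponent pair or other known technique comes close (applying $B$ again merely returns you to $C_h$), and this is precisely the step your sketch defers as the ``main obstacle''. Note also that the average saving you need is stronger than the best \emph{individual} bound known even conditionally: for cubic sums with quadratic irrational leading coefficient, \cite{cubweyl} obtains only $N^{5/7+\ep}$, and that under the $abc$-conjecture.

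Second, bad approximability $\|m\alpha\|\gg m^{-1}$ is not the property of quadratic irrationals the theorem rests on, and there is no known route from it to the exponent $5/6$ (your hypothesis would apply to every badly approximable number, which is not what the method delivers). The paper instead uses the $q$-analogue of van der Corput, whose arithmetic input is Theorem \ref{smden}: a quadratic irrational admits approximations $|\alpha-a/q|\le C(\alpha,\ep)/(qQ)$ with $q\asymp Q$ having no prime factor exceeding $q^{\ep}$. This smoothness is what permits the factorization $q=q_1q_2$ with $q_1\approx q^{1/3}$; one then differences with the step $2hq_1$, which both kills the quadratic term (the symmetric substitution gives $(m+hq_1)^4-(m-hq_1)^4=4hq_1(m^3+h^2q_1^2m)$, leaving a phase $un^3+vn$) and reduces the modulus to $q_2$. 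The $B$-step is then a \emph{completion} of the incomplete cubic sum modulo $r=q_2/(q_2,4h)$, and the cancellation is supplied unconditionally by Hua's complete-sum bounds $\Sigma(r;u,v)\ll_\ep r^{2/3+\ep}$ and $\Sigma(r;u,v)\ll_\ep r^{1/2+\ep}(r,v)$, valid for \emph{all} moduli --- this, not the size of the frequency $\alpha hN^2$, is why $AB$ suffices for the quartic while the cubic needed $BAAB$. With $Q=N^2$ and $q_1\approx N^{2/3}$, Theorem \ref{genthm} then gives $N^{5/6+\ep}$ directly. As it stands, your proposal assumes rather than proves its central estimate, and replaces the decisive smooth-denominator input by a weaker property that cannot do the same work.
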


We should compare this with the principal result from the author's
previous work \cite{cubweyl}, in which the Weyl exponent $3/4+\ep$ for
cubic exponential sums was reduced to $5/7+\ep$ for quadratic
irrational $\alpha$, but only under the assumption of the
$abc$-conjecture. For the quartic sum we are able to establish an
unconditional result. Moreover, in the quartic case we have reduced
the exponent by $7/8-5/6=1/24$, while the corresponding reduction in
the cubic case was only $3/4-5/7=1/28$. Thus our new result achieves a
better saving, without any unproved hypothesis.

As with the previous paper we use the $q$-analogue of van der Corput's
method. This requires a good approximation $a/q$ to $\alpha$,
in which $q$ factorizes in a suitable way.  For the quartic sum we
prove the following.
\begin{theorem}\label{genthm}
  Suppose that $a$ and $q$ are coprime integers and that $q$ factors
  as $q=q_1q_2$.  Then 
\[S_4(\alpha,N)\ll_{\ep}
\left(1+N^4\left|\alpha-\frac{a}{q}\right|\right)
(N^{1/2}q_1^{1/2}+N^{1/2}q_2^{1/4}+Nq_2^{-1/6})q^{\ep},\]
for any $N\ge 2q_1$ and any $\ep>0$.
\end{theorem}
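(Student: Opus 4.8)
The plan is to prove Theorem \ref{genthm} by the $q$-analogue of van der Corput's $A$- and $B$-processes applied to the rational model of $S_4(\alpha,N)$, using the factorisation $q=q_1q_2$ to route the two factors to the two processes. First I would dispose of the real parameter $\beta:=\alpha-a/q$. Since $e(\beta n^4)$ has total variation $2\pi|\beta|N^4$ on $[0,N]$, Abel summation gives
\[S_4(\alpha,N)\ll (1+N^4|\beta|)\max_{M\le N}\Big|\sum_{n\le M}e\big((a/q)n^4\big)\Big|,\]
so it suffices to bound the purely rational sum $R(M)=\sum_{n\le M}e((a/q)n^4)$, for $M\le N$, by $(N^{1/2}q_1^{1/2}+N^{1/2}q_2^{1/4}+Nq_2^{-1/6})q^\ep$. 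For $M\le 2q_1$ the trivial bound $|R(M)|\le M\ll N^{1/2}q_1^{1/2}$ already suffices (using $q_1\le N/2$), so I may assume $M>2q_1$.

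For the $A$-process I would apply the Weyl--van der Corput inequality with shifts running through the progression $q_1k$, $|k|<K$, rather than through consecutive integers. A one-line Cauchy--Schwarz argument (counting that each $a_n$ appears $K$ times among $a_{n+q_1k}$) gives the valid inequality
\[|R(M)|^2\le \frac{M+(K-1)q_1}{K}\sum_{|k|<K}\Big(1-\frac{|k|}{K}\Big)\sum_n a_{n+q_1k}\overline{a_n},\qquad a_n=e\big((a/q)n^4\big),\]
which preserves the full length $M$ and so avoids any loss from splitting into residue classes. The crucial gain is that $(a/q)q_1=a/q_2$: the differenced phase
\[\frac{a}{q}\big((n+q_1k)^4-n^4\big)=\frac{4ak}{q_2}n^3+\frac{6aq_1k^2}{q_2}n^2+\frac{4aq_1^2k^3}{q_2}n+\frac{aq_1^3k^4}{q_2}\]
is a cubic in $n$ all of whose coefficients have denominator dividing $q_2$. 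Hence $|R(M)|^2\ll \frac{M+(K-1)q_1}{K}\big(M+\sum_{1\le|k|<K}|T_k|\big)$, where each $T_k$ is a cubic exponential sum of length $\le M$ that is periodic modulo $q_2$.

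The $B$-process is the completion of each $T_k$ modulo $q_2$. By the completion lemma,
\[T_k\ll \frac{1}{q_2}\sum_{|c|\le q_2/2}|G_k(c)|\,\min\!\Big(M,\frac{q_2}{|c|}\Big),\qquad G_k(c)=\sum_{x\,(\operatorname{mod}q_2)}e\Big(\frac{4akx^3+6aq_1k^2x^2+4aq_1^2k^3x+cx}{q_2}\Big).\]
The zero frequency contributes $\ll (M/q_2)\,q_2^{2/3}=Mq_2^{-1/3}$ via the classical $q_2^{2/3}$ bound for complete cubic sums; this will become the term $Nq_2^{-1/6}$ after the final square root. The heart of the proof, and the step I expect to be the main obstacle, is to show that the nonzero frequencies contribute $\ll q_2^{1/2+\ep}$ on average over $k$, that is $\sum_{1\le|k|<K}\sum_{0<|c|\le q_2/2}|G_k(c)|/|c|\ll Kq_2^{1/2+\ep}$. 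For cube-free $q_2$ this follows from the Weil/Deligne bound $G_k(c)\ll q_2^{1/2+\ep}$ together with $\sum_{c\ne0}|c|^{-1}\ll\log q_2$; the genuine difficulty lies in the cube-full part of $q_2$, where a single $G_k(c)$ can be as large as $q_2^{2/3}$, so one must instead extract square-root cancellation from the average over $k$ (the leading coefficient $4ak$ varies), exploiting the second moment $\sum_k|G_k(c)|^2$ and the multiplicativity of $G_k(c)$ in the modulus.

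Granting the averaged bound $\sum_{1\le|k|<K}|T_k|\ll K(Mq_2^{-1/3}+q_2^{1/2})q^\ep$, I would finish by taking $K=\lceil M/q_1\rceil$, which is $\ge2$ exactly because $M>2q_1$; this is the origin of the hypothesis $N\ge 2q_1$. Then $M+(K-1)q_1\asymp M$ and $(M+(K-1)q_1)/K\asymp q_1$, so
\[|R(M)|^2\ll Mq_1+M^2q_2^{-1/3}q^\ep+Mq_2^{1/2}q^\ep,\]
whence $|R(M)|\ll N^{1/2}q_1^{1/2}+Nq_2^{-1/6}q^\ep+N^{1/2}q_2^{1/4}q^\ep$ after taking square roots and using $M\le N$. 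Multiplying by the factor $(1+N^4|\beta|)$ from the first step yields the theorem. The matching to the three summands in the statement is exact: the diagonal $k=0$ gives $N^{1/2}q_1^{1/2}$, the zero frequency gives $Nq_2^{-1/6}$, and the nonzero frequencies give $N^{1/2}q_2^{1/4}$.
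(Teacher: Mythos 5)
Your overall architecture --- partial summation to pass from $\alpha$ to $a/q$, an $A$-step with shifts in the progression $q_1\Z$ so that the differenced phase has denominator $q_2$, completion modulo $q_2$ as the $B$-step, and the choice $K\asymp M/q_1$ so that the three terms $N^{1/2}q_1^{1/2}$, $N^{1/2}q_2^{1/4}$, $Nq_2^{-1/6}$ come from the diagonal, the nonzero frequencies and the zero frequency respectively --- is exactly the skeleton of the paper's proof, and that bookkeeping is correct. But there is a genuine gap precisely where you flag it yourself: you \emph{grant} the averaged bound $\sum_{1\le|k|<K}\sum_{0<|c|\le q_2/2}|G_k(c)|/|c|\ll Kq_2^{1/2+\ep}$ for cube-full $q_2$, offering only the hope of a second-moment argument over $k$ exploiting the varying leading coefficient $4ak$. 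Nothing in the sketch substantiates this, and as you note a single $G_k(c)$ can genuinely be of size $q_2^{2/3}$; the claimed square-root cancellation on average over $k$ is not carried out, so the central estimate of the proof is assumed rather than proved. (You also gloss over the gcd issue $d=(q_2,4k)$, which must be extracted before any ``$(r,u)=1$'' bound applies, but that is a technicality.)

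The paper closes exactly this gap with a structural trick your differencing misses: it uses \emph{even} shifts $2hq_1$ and symmetrizes, substituting $m=n+(h_1+h_2)q_1$ and $h=h_1-h_2$, so that $(m+hq_1)^4-(m-hq_1)^4=4hq_1(m^3+h^2q_1^2m)$. The differenced phase is thus an \emph{odd} cubic: the quadratic and constant terms that force you into general cubic sums are killed identically over $\Z$, not merely after inverting $2$. The completed sums are then of the special shape $\Sigma(r;u,v)=\sum_{n\Mod{r}}e_r(un^3+vn)$ with $r=q_2/(q_2,4h)$ and $(r,u)=1$, for which Hua's two bounds --- $\Sigma(r;u,v)\ll_\ep r^{2/3+\ep}$ and $\Sigma(r;u,v)\ll_\ep r^{1/2+\ep}(r,v)$ --- hold for \emph{all} moduli, cube-full included. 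The cube-full danger is then neutralized not by averaging over $k$ but by averaging the gcd with the \emph{linear} coefficient over the completion frequencies: the divisor-sum estimate $\sum_{M_0<t\le M_0+M}\min\{r^{1/6},(r,t)\}\ll_\ep r^\ep(M+r^{1/6})$ gives $\Sigma(r;u,v;I)\ll_\ep(r^{1/2}+Nr^{-1/3})r^\ep$ unconditionally. Incidentally, your version could be repaired for odd $q_2$ by depressing the cubic via $x\mapsto x-q_1k\cdot\bar{2}$, which recovers the shape $un^3+vn$; the even shifts achieve this integrally and uniformly in the modulus. Without some such device your key averaged bound remains unsupported, so the proposal is incomplete at its heart.
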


For the cubic sum the argument of \cite{cubweyl} used the van der
Corput $BAAB$ steps. The final $B$-process produced a complete exponential sum
\[\twosum{w,x,y,z\!\!\!\!\!\pmod{q}}{q|w-x-y+z+t}
e\left(\frac{c(w^3-x^3-y^3+z^3)+uq_2(w-x)+mq_1(w-y)}{q}\right)\]
for certain integers $q_1$ and $q_2$. When $q$ is square-free
we could give a suitably good bound for this, but not for general
$q$. In the paper \cite{cubweyl} the $abc$ conjecture allowed one to
restrict attention to square-free $q$.

In contrast, for our Theorem \ref{sqrt2} it suffices to use the much
simpler van der Corput $AB$-process.  This will produce a complete
exponential sum
\[S(a,h;q)=\sum_{n=1}^qe((an^3+hn)/q),\]
for which we have good bounds for all moduli $q$.  This allows
us to give an unconditional treatment of the quartic sum $S_4(\alpha,N)$.

Perhaps the most surprising aspect of this paper is that in handling
$S_k(\alpha,N)$ for $k=4$ rather than $k=3$ one replaces the $BAAB$
process by the simpler $AB$ process.

We remark that one can improve on the factor $1+N^4|\alpha-a/q|$ in
Theorem \ref{genthm} when one has $|\alpha-a/q|\gg N^{-4}$. Indeed,
even when $q$ is not known to factor one can use a standard version of
the van der Corput $AB$-process to obtain non-trivial results. 
However this will not be relevant for the application to Theorem \ref{sqrt2}.

{\bf Acknowledgement.} The author would like to thank the anonymous
referee for their careful reading of the paper, which has resulted in
the elimination of a number of misprints. 

\section{The van der corput $A$-Process}
In this section we use the $q$-analogue of the van der Corput
A-process, which produces the following result, in which we set
\[\Sigma(q;u,v;I)=\sum_{n\in I}e_q(un^3+vn).\]
\begin{lemma}\label{L1}
  Let $\alpha=a/q+\delta$ with $(q,a)=1$ and $q=q_1q_2$, where
  $1\le q_1\le N/2$. Then
\[S_4(\alpha,N)^2\ll(1+|\delta| N^4)^2q_1\left\{N+\sum_{1\le h\le N/(2q_1)}
\max_I |\Sigma(q_2;4ah,4ah^3q_1^2;I)|\right\},\]
where $I$ runs over sub-intervals of $(0,N]$.
\end{lemma}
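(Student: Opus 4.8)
The plan is to deduce Lemma~\ref{L1} from a single van der Corput $A$-step, preceded by a partial summation that disposes of the perturbation $\delta$. First I would factor $e(\alpha n^4)=e_q(an^4)\,e(\delta n^4)$ and treat $e(\delta n^4)$ as a smooth weight: since $\delta n^4$ has total variation $\ll|\delta|N^4$ on $(0,N]$, Abel summation gives
\[S_4(\alpha,N)\ll (1+|\delta|N^4)\max_{M\le N}\Big|\sum_{n\le M}e_q(an^4)\Big|,\]
so that squaring produces the factor $(1+|\delta|N^4)^2$ and reduces everything to the pure sum $T(M)=\sum_{n\le M}e_q(an^4)$. This is why no $\delta$ survives inside $\Sigma$, and it lets the remaining argument work with a genuinely rational phase.

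Next I would split $T(M)$ into residue classes modulo $q_1$, writing $n=r+q_1k$ with $0\le r<q_1$ and $k$ running over an interval of length $K\asymp N/q_1$; here the hypothesis $q_1\le N/2$ guarantees $K\ge1$. Cauchy--Schwarz over the $q_1$ classes produces the factor $q_1$,
\[|T(M)|^2\le q_1\sum_{0\le r<q_1}\Big|\sum_k e_q\big(a(r+q_1k)^4\big)\Big|^2,\]
and to each inner sum I apply the classical van der Corput inequality with shift parameter $H\asymp K\asymp N/q_1$, for which $(K+H)/H\ll1$. Writing $m=r+q_1k$, a shift by $h$ in $k$ becomes a shift by $q_1h$ in $m$; on recombining the residue classes the diagonal term $h=0$ contributes the $N$ inside the braces, while the off-diagonal terms give
\[\sum_{1\le h\le N/(2q_1)}\Big|\sum_{m\in I_h}e_q\big(a\{(m+q_1h)^4-m^4\}\big)\Big|,\]
where each $I_h\subseteq(0,N]$ is the interval on which both $m$ and $m+q_1h$ lie in range.

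It remains to identify the inner sum with $\Sigma(q_2;4ah,4ah^3q_1^2;I)$, and this is where the characteristic feature of the quartic case enters. Expanding $(m+q_1h)^4-m^4$ and reducing $\tfrac{a}{q}\{(m+q_1h)^4-m^4\}$ modulo $1$ yields a cubic polynomial in $m$ with denominator dividing $q_2=q/q_1$, whose cubic coefficient is $4ah/q_2$ and whose linear coefficient is $4ah^3q_1^2/q_2$; there is in addition a quadratic term. The one step to handle with genuine care is the removal of this quadratic term by completing the cube: a translation $m\mapsto m+c$ of the summation variable, which merely relocates $I_h$ to another subinterval of $(0,N]$ and is therefore harmless once we pass to the maximum over all such intervals $I$. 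One must check that this translation introduces no denominator beyond $q_2$ and that the boundary contributions from the van der Corput step are small enough to be absorbed into the diagonal term $N$; granting this, the inner sums take the shape $\Sigma(q_2;\cdot,\cdot;I)$, and collecting the three factors $(1+|\delta|N^4)^2$, $q_1$, and the braced quantity yields the stated bound.
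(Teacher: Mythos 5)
Your opening reduction is sound and parallels the paper: the partial summation removing $e(\delta n^4)$ at the cost of $(1+|\delta|N^4)$, the splitting into residue classes modulo $q_1$, and Cauchy's inequality producing the factor $q_1$ all match the paper's proof in substance. The genuine gap is at exactly the step you flagged and then waved through: completing the cube. The one-sided difference gives, up to a constant phase, $\frac{a}{q_2}\bigl(4hm^3+6q_1h^2m^2+4q_1^2h^3m\bigr)$, and a translation $m\mapsto m+c$ kills the quadratic term identically only for $c=-q_1h/2$, which is not an integer when $q_1h$ is odd. Nor does the weaker requirement that the quadratic coefficient vanish modulo $q_2$ rescue you: the congruence $12hc+6q_1h^2\equiv 0\pmod{q_2}$, i.e.\ $6h(2c+q_1h)\equiv 0\pmod{q_2}$, is insoluble whenever $4\mid q_2$ and $q_1h$ is odd, since then $2c+q_1h$ is odd and the left-hand side is exactly divisible by $2$ to the first power. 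The lemma is asserted for every factorization $q=q_1q_2$, and in the application $q$ is a product of small primes, so the case $4\mid q_2$ with $q_1$ odd certainly occurs; the obstruction is real, not a technicality. Two secondary problems arise at the same point: even when the translation exists ($q_1h$ even) it changes the linear coefficient, so the identification with $\Sigma(q_2;4ah,4ah^3q_1^2;I)$ as literally stated fails; and if one instead solves a congruence, the translation can have size comparable to $q_2\gg N$, so your claim that it ``merely relocates $I_h$ to another subinterval of $(0,N]$'' is unjustified.

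The paper avoids all of this by differencing symmetrically with \emph{even} shifts. It writes $HS_4(a/q,t)=\sum_{h\le H}\sum_{0<n+2hq_1\le t}e_q\bigl(a(n+2hq_1)^4\bigr)$ with $H=[N/(2q_1)]$, applies Cauchy, and substitutes $m=n+(h_1+h_2)q_1$, $h=h_1-h_2$, so that the differenced phase is $(m+hq_1)^4-(m-hq_1)^4=8hq_1(m^3+h^2q_1^2m)$ (the paper records the constant as $4$, an inconsequential slip), an odd polynomial in $m$: the quadratic and quartic terms cancel identically, no completing of the cube is needed, the midpoint $m$ is an integer precisely because the shifts are even multiples of $q_1$, and the summation condition $|h|q_1<m\le t-|h|q_1$ automatically confines $m$ to a subinterval of $(0,N]$. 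Your argument can be repaired by the same device: run van der Corput with shifts $2q_1h$ rather than $q_1h$ (equivalently, difference along residue classes modulo $2q_1$); then the recentering $m'=m+q_1h$ is integral, the even-degree terms cancel, and the range $1\le h\le N/(2q_1)$ matches the lemma.
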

\begin{proof}
We have
\begin{eqnarray*}
S_4(\alpha,N)&=&S_4(a/q,N)e(\delta N^4)-\int_0^N(2\pi i\delta)4t^3
e(\delta t^4)S_4(a/q,t)dt\\   &\ll& (1+N^4|\delta|)\max_{t\le N}|S_4(a/q,t)|,
\end{eqnarray*}
by partial summation. We now begin the van der corput $A$-process by
setting $H=[N/2q_1]\ge 1$ and writing
\[HS_4(a/q,t)=
\sum_{h\le H}\;\sum_{n: 0<n+2hq_1\le t}e_q\left(a(n+2hq_1)^4\right).\]
If $0<n+2hq_1\le t$ with $0<h\le H$ and $0\le t\le N$ we will have
$-N<n<N$, so that
\[HS_4(a/q,t)=\sum_{|n|< N}
\sum_{\substack{1\le h\le H\\ 0<n+2hq_1\le t}}e_q\left(a(n+2hq_1)^4\right).\]
Cauchy's inequality then yields
\[H^2|S_4(a/q,t)|^2\le 2N\sum_{|n|< N}
\left|\sum_{\substack{1\le h\le H\\ 0<n+2hq_1\le t}}
e_q\left(a(n+2hq_1)^4\right)\right|^2.\]
We now expand the square, and change the order of summation to produce
\[H^2|S_4(a/q,t)|^2\le 2N\sum_{0<h_1,h_2\le H} \;
\sum_{\substack{n\\ 0<n+2h_1q_1\le t\\ 0<n+2h_2q_1\le t}}
e_q\left(a\{(n+2h_1q_1)^4-(n+2h_2q_1)^4\}\right).\]
We then write $m=n+(h_1+h_2)q_1$ and $h=h_1-h_2$, so that
\[(n+2h_1q_1)^4-(n+2h_2q_1)^4=(m+hq_1)^4-(m-hq_1)^4=4hq_1(m^3+h^2q_1^2m).\]
The conditions $0<n+2h_1q_1\le t$ and $0<n+2h_2q_1\le t$ are
equivalent to the requirement that $|h|q_1<m\le t-|h|q_1$, and we
deduce that
\[H^2|S_4(a/q,t)|^2\le 2N\sum_{0<h_1,h_2\le H} \;\;
\sum_{|h|q_1<m\le t-|h|q_1}e_{q_2}\left(4ah(m^3+h^2q_1^2m)\right).\]
Each value of $h$ arises at most $H$ times, and $|h|\le H$ in each
case, whence
\[H^2|S_4(a/q,t)|^2\le 4NH\sum_{0\le h\le H} \;
\left|\;\sum_{|h|q_1<m\le t-|h|q_1}e_{q_2}\left(4ah(m^3+h^2q_1^2m)\right)\right|.\]
The term $h=0$ produces an overall contribution at most $4N^2H$, and
the lemma follows.
\end{proof}

\section{The van der corput $B$-Process}
In this section we apply the $B$-process to $\Sigma(q_2;4ah,4ah^3q_1^2;I)$.
We begin by writing $d=(q_2,4h)$ and setting $q_2=dr$, $4ah=du$, and
$4ah^3q_1^2=dv$, whence
\begin{equation}\label{B2}
  \Sigma(q_2;4ah,4ah^3q_1^2;I)=\Sigma(r;u,v;I),
  \end{equation}
with $r$ and $u$ coprime.  

For our application, the $B$-process is given by the following lemma.
\begin{lemma}\label{L2}
  When $I$ is a sub-interval of $(0,N]$ we have
  \[\Sigma(r;u,v;I)\ll_\ep (r^{1/2}+Nr^{-1/3})r^\ep\]
for any fixed $\ep>0$.
\end{lemma}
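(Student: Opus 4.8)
The plan is to complete the incomplete sum $\Sigma(r;u,v;I)$ into the complete cubic sums $S(u,w;r)$ and then to feed in good pointwise bounds for these complete sums. By (\ref{B2}) I may assume $(u,r)=1$, which is precisely the hypothesis under which the complete sums are well behaved. First I would carry out the completion: Fourier-expanding the $r$-periodic function $n\mapsto e_r(un^3+vn)$ and summing over $n\in I$ gives the exact identity
\[\Sigma(r;u,v;I)=\frac1r\sum_{-r/2<h\le r/2}S(u,v-h;r)\Big(\sum_{n\in I}e_r(hn)\Big),\]
where the inner geometric sum satisfies $\big|\sum_{n\in I}e_r(hn)\big|\ll\min(N,r/|h|)$. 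Everything is thereby reduced to estimating $\frac1r\sum_{h}|S(u,v-h;r)|\min(N,r/|h|)$.

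The essential input is a bound for $S(u,w;r)$ valid for \emph{every} modulus. By multiplicativity it suffices to treat prime powers, and a $p$-adic stationary-phase analysis of $un^3+wn$ — whose only degeneracy occurs when $p\mid w$, the discriminant of the derivative $3un^2+w$ being $-12uw$ — should yield
\[S(u,w;r)\ll_\ep r^{1/2+\ep}(w,r)^{1/2}\quad\text{together with the uniform cap}\quad S(u,w;r)\ll_\ep r^{2/3+\ep}.\]
This is the step I expect to be the main obstacle: one must control the degeneracy of the stationary points over prime-power moduli and verify that the gcd-dependence saturates exactly at the cubic Weyl exponent $2/3$ (for instance $S(u,0;p^{3})=p^{2}$). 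Everything after this point is bookkeeping with divisor sums.

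Granting these bounds, I would split the range of $h$ according to the size of $(v-h,r)$, using in both cases the elementary bound $\sum_{h\equiv v\Mod d}\min(N,r/|h|)\ll\frac rd\log r+N$. For the degenerate $h$ with $(v-h,r)>r^{1/3}$ I would invoke the cap $r^{2/3+\ep}$; since each such $h$ lies in a progression $h\equiv v\Mod d$ for some $d\mid r$ with $d>r^{1/3}$, and $\sum_{d\mid r,\,d>r^{1/3}}\big(\frac rd\log r+N\big)\ll(r^{2/3}+N)r^\ep$, this contributes $\ll(r^{1/3}+Nr^{-1/3})r^\ep$, which is acceptable.

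For the remaining $h$ I would use $r^{1/2+\ep}(v-h,r)^{1/2}$ together with the identity $n^{1/2}=\sum_{d\mid n}\rho(d)$, where $\rho$ is multiplicative with $0\le\rho(d)\le d^{1/2}$, $\sum_{d\mid r}\rho(d)/d=\prod_{p\mid r}(1+p^{-1/2})\ll_\ep r^\ep$ and $\sum_{d\mid r,\,d\le r^{1/3}}\rho(d)\ll r^{1/6+\ep}$. Writing $(v-h,r)^{1/2}=\sum_{d\mid(v-h,r)}\rho(d)$ and interchanging the order of summation reduces this part to the same inner sums over $h\equiv v\Mod d$; the two resulting divisor sums produce exactly $(r^{1/2}+Nr^{-1/3})r^\ep$. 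Collecting the two contributions yields the stated bound, the $r^{1/2}$ arising from the typical square-root cancellation and the $Nr^{-1/3}$ from the sparse degenerate moduli.
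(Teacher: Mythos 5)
Your proposal is correct, and its skeleton is the same as the paper's: completion of $\Sigma(r;u,v;I)$ into complete cubic sums weighted by the kernel $\min(N,r/|h|)$, pointwise bounds on the complete sums that interpolate between square-root cancellation with a gcd penalty and a uniform $r^{2/3+\ep}$ cap, and divisor-sum bookkeeping. The one substantive divergence is at the step you yourself flag as the main obstacle: the paper does not prove anything about the complete sums, but simply quotes Hua's classical estimates $\Sigma(r;u,w)\ll_\ep r^{2/3+\ep}$ and $\Sigma(r;u,w)\ll_\ep r^{1/2+\ep}(r,w)$, so no stationary-phase analysis is needed at all. Your proposed bound $r^{1/2+\ep}(r,w)^{1/2}$ is \emph{stronger} than Hua's second estimate, and while it is plausibly provable along the lines you sketch (with some care at $p=2,3$), it is more than the lemma requires: your own bookkeeping goes through verbatim with Hua's gcd exponent $1$ if you move the splitting threshold from $(v-h,r)>r^{1/3}$ down to $(v-h,r)>r^{1/6}$ — equivalently, one uses $r^{1/2+\ep}\min\{r^{1/6},(r,v-h)\}$, which is exactly the combination appearing in the paper. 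Your second organizational difference is also harmless: you sort the $h$-sum by the size of $(v-h,r)$ and use the multiplicative decomposition $n^{1/2}=\sum_{d\mid n}\rho(d)$, whereas the paper decomposes the range $|m|\le r/2$ dyadically and applies the elementary estimate $\sum_{M_0<t\le M_0+M}\min\{r^{1/6},(r,t)\}\ll_\ep r^{\ep}(M+r^{1/6})$ on each block; the two devices are interchangeable, and your numerics check out (the degenerate range contributes $(r^{1/3}+Nr^{-1/3})r^{\ep}$, the rest $(r^{1/2}+Nr^{-1/3})r^{\ep}$). Finally, you are right that the coprimality $(u,r)=1$ inherited from (\ref{B2}) is essential — the stated bound is false without it — even though it is left implicit in the lemma as displayed.
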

\begin{proof}
  We start from the identity
  \[\Sigma(r;u,v;I)=r^{-1}\sum_{-r/2<m\le r/2}\;\;
  \sum_{s\Mod{r}}e_r(us^3+vs)\sum_{n\in I}e_r\big(m(s-n)\big).\]
  Thus
   \[\Sigma(r;u,v;I)=r^{-1}\sum_{-r/2<m\le r/2}\;\;\sum_{n\in I}e_r(-mn)
   \Sigma(r;u,v+m),\]
   where we define
  \[\Sigma(r;u,v)=\sum_{n\Mod{r}}e_r(un^3+vn).\]   
  However
   \[\sum_{n\in I}e_r(-mn)\ll\min(N,r/|m|),\]
whence
\begin{equation}\label{B1}
\Sigma(r;u,v;I)\ll r^{-1}
  \sum_{|m|\le r/2}\min(N,r/|m|)\left|\Sigma(r;u,v+m)\right|.
  \end{equation}

We now require estimates for $\Sigma(r;u,v)$. Whenever $(r,u)=1$ and
$\ep>0$ we have
\[\Sigma(r;u,v)\ll_\ep r^{2/3+\ep}\]
by Hua \cite{hua2}, and
\[\Sigma(r;u,v)\ll_\ep r^{1/2+\ep}(r,v)\]
by Hua \cite{hua}. To apply these we cover the range $|m|\le r/2$ in (\ref{B1})
with intervals $[m|\le r/N$ and $M<|m|\le 2M$ for suitable dyadic $M$
  with $r/N\ll M\ll r$. We then observe that
  \begin{eqnarray*}
    \sum_{M_0<t\le M_0+M}\min\{r^{1/6},(r,t)\}&\le &
\sum_{d\mid r}\sum_{\substack{M_0<t\le M_0+M\\ d\mid t}}\min\{r^{1/6},d\}\\
&\le &\sum_{d\mid r}(M/d+1)\min\{r^{1/6},d\}\\
&\le &\sum_{d\mid r}(M/d).d + \sum_{d\mid r}1.r^{1/6}\\
&\ll_\ep & r^{\ep}(M+r^{1/6}).
\end{eqnarray*}
Thus
\[\sum_{|m|\le r/N}\min(N,r/|m|)\left|\Sigma(r;u,v+m)\right|
\ll_\ep r^{1/2+\ep}(r/N+r^{1/6})N,\]
and
\[\sum_{M<|m|\le 2M}\min(N,r/|m|)\left|\Sigma(r;u,v+m)\right|
\ll_\ep r^{1/2+\ep}(M+r^{1/6})\frac{r}{M}.\]
Since $r/N\ll M\ll r$ both the above bounds are
$O_\ep(r^{1/2+\ep}(r+Nr^{1/6}))$. We deduce that the sum (\ref{B1}) is
\[\ll_\ep r^{-1}\left\{1+\sum_{\mathrm{dyadic}\; M}1\right\}
r^{1/2+\ep}(r+Nr^{1/6})\ll_\ep (r^{1/2}+Nr^{-1/3})r^{2\ep},\]
and the lemma follows on replacing $\ep$ by $\ep/2$.
\end{proof}

We can now complete the proof of Theorem \ref{genthm}.
\begin{proof}[Proof of Theorem \ref{genthm}]
  In view of the relation (\ref{B2}) we deduce from Lemmas \ref{L1} and
  \ref{L2} that
  \[S_4(\alpha,N)^2\ll_\ep (1+|\delta| N^4)^2q_1
  \left\{N+\sum_{1\le h\le N/(2q_1)}q^\ep(r^{1/2}+Nr^{-1/3})\right\},\]
where $r=q_2/(q_2,4h)$. It follows that
\[S_4(\alpha,N)^2\ll_\ep  (1+|\delta| N^4)^2 q^\ep q_1\left\{N+q_2^{1/2}N/q_1+
Nq_2^{-1/3}\sum_{1\le h\le N/(2q_1)}(q_2,h)^{1/3}\right\}.\]
However a standard argument shows that
\begin{eqnarray*}
  \sum_{1\le h\le N/(2q_1)}(q_2,h)^{1/3}&\le&\sum_{d\mid q_2}d^{1/3}
  \sum_{\substack{h\le N/(2q_1)\\ d\mid h}}1\\
  &\le& \sum_{d\mid q_2}d^{1/3}\frac{N}{2q_1d}\\
  &\ll_\ep& Nq_1^{-1}q^{\ep}.
\end{eqnarray*}
We therefore have
\[S_4(\alpha,N)^2\ll_\ep  (1+|\delta| N^4)^2q^{2\ep}q_1\left\{N+
q_2^{1/2}Nq_1^{-1}+N^2q_2^{-1/3}q_1^{-1}\right\},\]
and the theorem follows.
\end{proof}

\section{Deduction of Theorem \ref{sqrt2}}

The key result which utilizes the special approximation properties of
real quadratic irrationals is the following.

\begin{theorem}\label{smden}
Let $\alpha\in\R$ be a quadratic irrational, and let $\ep>0$ be
given.  Then there is a constant $C(\alpha,\ep)$ such that, for any
$Q\in\N$, one can solve
\[\left|\alpha-\frac{a}{q}\right|\le\frac{C(\alpha,\ep)}{qQ},\;\;\;
(a\in\Z,\;\;q\in\N,\;\; Q\ll_{\ep,\alpha} q\le Q)\]
with $q$ having no prime factors $p>q^{\ep}$.
\end{theorem}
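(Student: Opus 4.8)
The plan is to exploit the eventual periodicity of the continued fraction of $\alpha$. Writing $p_n/q_n$ for the convergents, periodicity forces the partial quotients to be bounded, so that $q_{n+1}\asymp q_n$ and $\|q_n\alpha\|\asymp q_n^{-1}$, where $\|\cdot\|$ denotes distance to the nearest integer. Thus at every scale there is a convergent denominator $q_n\asymp Q$ furnishing an approximation of the required quality $|\alpha-p_n/q_n|\le C/(q_nQ)$; the one property such a $q_n$ need not have is smoothness. The first step is therefore to record the structure of \emph{all} denominators $q$ with $\|q\alpha\|\le C/Q$ and $Q\ll q\le Q$. Using the Ostrowski representation $q=\sum_m c_mq_m$ one sees that the condition $\|q\alpha\|\le C/Q$ essentially pins the low-order digits (those with $q_m\le Q/C$) to a fixed tail, while leaving the top $O_\alpha(\log C)$ digits free subject to a budget. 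In other words the admissible $q$ form an explicit family $\mathcal F(Q)$ of $\asymp C$ integers in $[Q/2,Q]$, obtained from a base point by bounded integer combinations of the convergent denominators $q_m$ with $Q/C\ll q_m\le Q$.

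It is convenient to reinterpret $\mathcal F(Q)$ algebraically. If $Ax^2+Bx+C_0=0$ is the minimal equation of $\alpha$ and $F(a,q)=Aa^2+Baq+C_0q^2$ is the associated integral binary quadratic form of positive discriminant, then for $a$ nearest to $q\alpha$ one has $\|q\alpha\|\asymp |F(a,q)|/q$. Hence $q$ is a good denominator precisely when $F$ represents a bounded nonzero integer $n$ at $(a,q)$, and for each such $n$ with $1\le|n|\le C'$ the solutions form finitely many orbits under the automorph group of $F$, generated by the fundamental unit of the relevant order. Within an orbit the denominators grow geometrically, contributing one representative at scale $Q$; letting $n$ range over the bounded set then reproduces $\mathcal F(Q)$. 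This description makes the generators and the permissible perturbations completely explicit, and it is the form in which I would carry out the construction.

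The final, decisive step is to locate in $\mathcal F(Q)$ an integer with no prime factor exceeding $q^\ep$, and this is where I expect the real difficulty to lie. The obstacle is genuine: a single convergent denominator is a Lucas-type value and in general carries a large prime factor, so one cannot simply take $q=q_n$, and since $\mathcal F(Q)$ has only $\asymp C$ members a crude appeal to the positive density of smooth numbers is not available. Instead I would use the freedom to enlarge $C=C(\alpha,\ep)$ together with the fact that $\mathcal F(Q)$ is a \emph{generalised} arithmetic progression of rank $O_\alpha(\log C)$, with smallest increment $\asymp Q/C$, and argue that such a structured family cannot conspire to avoid the smooth numbers entirely once $C$ is large in terms of $\ep$; concretely, I would try to select the free top-order parameters (equivalently, the orbit and the represented integer $n$) so as to force controlled small factors into $q$ while preserving $\|q\alpha\|\le C/Q$. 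Granting this, the nearest integer $a$ to $q\alpha$ gives the required pair $(a,q)$: the constraints $q\le Q$ and $q\gg_{\ep,\alpha}Q$ are built in, and replacing $\ep$ by $\ep/2$ upgrades $Q^\ep$-smoothness to the stated $q^\ep$-smoothness. The heart of the matter is thus the smooth-number input showing that the thin, arithmetically rigid set $\mathcal F(Q)$ must meet the $Q^\ep$-smooth integers.
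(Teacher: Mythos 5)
You should first note what the paper actually does with this statement: it gives no proof at all, but simply cites Theorem 3 of \cite{cubweyl} (adding only the remark that the lower bound $q\gg_{\ep,\alpha}Q$ was omitted from the statement there though present in its proof). So the comparison is with that earlier proof. Your first two paragraphs are sound as far as they go: for a quadratic irrational the partial quotients are bounded, so $q_{n+1}\asymp q_n$ and $\|q_n\alpha\|\asymp q_n^{-1}$; the identification of the good denominators $q\le Q$, $\|q\alpha\|\le C/Q$ with the bounded values $|F(a,q)|\le C'$ of the attached binary quadratic form, lying in finitely many orbits of the automorph group, is correct; and your count that this family $\mathcal F(Q)$ has only $\asymp C$ members in $[Q/2,Q]$ is also right. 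Your instinct that a single convergent denominator is a Lucas-type value typically carrying a large prime factor is likewise correct.

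But this is exactly why your third paragraph is not a proof and cannot be completed along the lines you sketch; you concede as much with ``Granting this''. The two mechanisms you gesture at both demonstrably fall short. First, density: smooth numbers of parameter $u=1/\ep$ have positive density $\rho(u)$ near $Q$, but your family has $Q^{o(1)}$ elements (indeed $\asymp C$, with $C$ independent of $Q$), and no technique exists for detecting smooth numbers in sets that thin --- even the existence of $x^\ep$-smooth numbers in every interval of length $x^{1/2}$ is open, and a rank-$O_\alpha(\log C)$ generalised progression with smallest increment $\asymp Q/C$ is far sparser than that. Enlarging $C$ only adds $O(\log C)$ free digits, which does not change this. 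Second, factor-forcing: choosing the free top-order Ostrowski digits (equivalently the orbit and the represented integer $n$) so that $q$ is divisible by prescribed small primes controls only a divisor of $q$ of bounded size; it says nothing about the prime factors of the cofactor, which is where the large primes live. The statement needs \emph{all} of $q$, a quantity of size $Q$, to be $q^\ep$-smooth, and no divisibility tinkering within a fixed thin family delivers that. So the heart of the theorem --- the only genuinely difficult point, as you correctly identify --- is missing.

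The cited proof in \cite{cubweyl} proceeds in the opposite direction: rather than describing the full family of good denominators and hoping it meets the smooth numbers, it \emph{constructs} the denominator explicitly, exploiting the solutions of the Pell equation attached to $\alpha$ (powers of the fundamental automorph) and the recurrence and divisibility relations among them, so that smoothness is arranged by the construction itself, with the constant $C(\alpha,\ep)$ absorbing the losses and the lower bound $q\gg_{\ep,\alpha}Q$ coming out of the same construction. Your write-up would become a proof only if supplemented by such a constructive input; as it stands, the decisive step is an unsupported hope, and one that the density and structural heuristics you invoke cannot substantiate.
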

This is essentially Theorem 3 of \cite{cubweyl}. However the assertion
that $q\gg_{\ep,\alpha} Q$ was accidentally omitted from the statement there,
although it is explicitly mentioned in the proof.

To deduce Theorem \ref{sqrt2} we 
approximate $\alpha$ as above, with $Q$ taken to be $N^2$.
We proceed to build a divisor $q_1$ of
$q$, one prime factor at a time, to produce a product in the range
\[q^{1/3}\le q_1\le q^{1/3+\ep}.\]
We will therefore have $q=q_1q_2$ with
\[q^{2/3-\ep}\le q_2\le q^{2/3}.\]
Moreover $q_1\ll _{\ep,\alpha}q^{1/3+\ep}\le N^{2/3+2\ep}$
so that $2q_1\le N$ for large enough $N$.  Since
$N^4|\alpha-a/q|\ll_{\ep,\alpha}1$ 
it now follows from Theorem \ref{genthm} that
\[S_4(\alpha,N)\ll_{\ep,\alpha}
\left(N^{1/2}q^{1/6+\ep/2}+Nq^{-1/9+\ep/6}\right)q^{\ep}
\ll_{\ep,\alpha}N^{5/6+3\ep}.\]
This suffices for Theorem \ref{sqrt2}, on re-defining $\ep$.

\section{Addendum}

Since preparing the original version of this paper it has been pointed
out to the author by Professor Ping Xi that one can handle
$S_k(\alpha,N)$ to good effect by the $q$-analogue of van
der Corput's method for all $k\ge 5$, but subject to the
$abc$-conjecture following the 
author's work \cite{cubweyl}. The general form of the $q$-analogue of
van der Corput's method has been developed by Wu and Xi
\cite{WuXi}. Subject to the $abc$-conjecture one finds that the
$A^sB$-process produces a result for the sum
$S_k(\alpha,N)$, having the same shape as Theorem \ref{sqrt2} but with exponent
\[1-\frac{2s-k}{2(2^s-2)}+\ep.\]
The choice $s=[(k+3)/2]$ is optimal, and produces an improvement on
the Weyl bound for all $k$.

\bigskip
\bigskip

\end{document}